\documentclass[12pt]{article}
\usepackage{latexsym,amsfonts,amssymb,mathrsfs,float}

\usepackage{amsthm}

\usepackage{amsmath}

\setlength{\parindent}{16pt} \setlength{\parskip}{8pt}
\setlength{\baselineskip}{8pt plus 2pt minus 1pt}
\setlength{\textheight}{230 mm} \setlength{\textwidth}{174 mm}
\oddsidemargin=-0.4cm
\hoffset -0.1cm \voffset -2.8cm

\usepackage[dvips]{color}
\usepackage{colordvi,multicol}

\newtheorem{thm}{Theorem}[section]

\newtheorem{lem}[thm]{Lemma}
\newtheorem{pro}[thm]{Proposition}

\newtheorem{rem}[thm]{Remark}

\numberwithin{equation}{section}

\begin{document}
\title{\bf Limiting behaviors for  longest consecutive switches in an IID Bernoulli sequence}
\author{Chen-Xu Hao and Ting Ma\thanks{Corresponding author: College of Mathematics, Sichuan University, Chengdu 610065, China\vskip 0cm E-mail address: 476924193@qq.com; matingting2008@yeah.net}\\ \\
 {\small College of Mathematics, Sichuan  University,  China}}

\maketitle
\date{}
\vskip 0.5cm \noindent{\bf Abstract}\quad
 In this paper we mainly discuss sharp lower and upper bounds for the length of  longest consecutive switches in IID Bernoulli sequences.
 This work is an extension of results in Erd\H{o}s and R\'{e}v\'{e}sz (1975) for longest head-run and  Hao et al.  (2021)  for longest consecutive switches in unbiased coin-tossing, and might be applied to reliability theory, biology, quality control, pattern recognition, finance, etc.

\smallskip

\noindent {\bf Keywords: } longest consecutive switches, lower and upper bounds, Borel-Cantelli lemma.

\smallskip

\noindent {\bf Mathematics Subject Classification (2010)}\quad 60F15

\section{Introduction}

A biased coin with two sides  is tossed independently and repeatedly, where 0 is used to denote ``tail" and 1 to denote ``head".
In the rest of this paper, we let $\{X_i,i\geq 1\}$ be a sequence of independent Bernoulli trails  with $P\{X_1=1\}=p$ and $P\{X_1=0\}=1-p=:q$ by assuming that $0<p<1$.

In unbiased case, Erd\H{o}s et al. (1970, 1975) studied the length of longest head-run's limit behaviors, these limit theorems have been extended in many subsequent studies. We refer to Schilling (1990), Binswanger and Embrechts (1994), Muselli (2000),  T\'{u}ri (2009), and Novak (2017). Mao et al. (2015) obtained a large deviation principle for  longest head run.  For more recent related references, we refer to  Liu and Yang (2016), Chen and Yu (2018), and Pawelec and Urba\'{n}ski (2020).

Anush (2012)  posed a question concerning the bounds for the number of coin tossing switches. According to Li (2013), a ``switch"  is a tail followed by a head or a head followed by a tail.  The exact probability distribution for the number of coin tossing switches was given by Joriki (2012). However, limiting behaviors of the related distribution cannot be obtained directly due to its complexity. Therefore, probabilistic estimates are necessary for applications such as in  molecular biology (Glaz and Naus (1991), Pincus and Singer (2016)), sensor networks (Paolo et al. (2015)),  and image detection (Ni et al. (2021)). Li (2013) considered the number of switches in unbiased coin-tossing, and established the central limit theorem and the large deviation principle for the total number of switches. In Hao et al. (2021), the authors obtained some limiting results for length of the longest consecutive switches in unbiased case with $p=q=1/2$.
The purpose of the present note is to extend the results in Hao et al. (2021) to IID Bernoulli sequences. As an application, it is reasonable to deduce similar results in the paper for the Markov case (see e.g.  in Samarova (1981), Vaggelatou  (2003), Liu et al. (2018), and Mezhennaya (2019)).

In the begining, we introduce some notations. Following Li (2013), for $m,n\in\mathbb N$ define
\begin{equation*}\label{def-SNM}
S_n^{(m)}:=\sum_{i=m+1}^{n+m-1}(1-X_{i-1})X_i+X_{i-1}(1-X_i)
\end{equation*}
as the total number of  switches  in $n$ trails $\{X_m,X_{m+1}, \ldots,X_{m+n-1}\}$.
 Moreover, for $m,N\in\mathbb N$ and $n=1,\ldots, N$, define by
$$
H_{m,n}^{(N)}:=\bigcup_{i=m}^{ m+N-n+1}\{S_n^{(i)}=n-1\}
$$
 the set of consecutive switches of length $n-1$ in $N$ trails $\{X_{m},X_{m+1},\ldots,X_{m+N-1}\}$. Then $M_N^{(m)}$, defined as
\begin{equation}\label{NUM-SWITCH}
M_N^{(m)}:=\max_{1\leq n\leq N}\left\{n-1|~H_{m,n}^{(N)}\neq \emptyset\right\},
\end{equation}
stands for {\it the length of longest consecutive switches} in $N$ trails  $\{X_{m},X_{m+1}, \ldots,X_{m+N-1}\}$. For example, if one gets ``$11001011101$", then it includes 6 switches and the length of longest consecutive switches is 3.
When $m=1$,  the length of longest consecutive switches in $N$ trails  $\{X_{1}, \ldots,X_{N}\}$ is  particularly denoted by $M_N$.

Throughout the paper, we use $\left\lfloor a\right\rfloor$ to denote the largest integer which  is no more than any real number $a$.

The rest of this paper is organized as follows.  In Section 2 we present main results. The proofs are given in Section 3.

\section{Main results }\label{MAIN-RES}
As discussed in Erd\H{o}s et al. (1975) and Hao et al. (2021), we mainly investigate sharp bounds  for  longest  consecutive switches. The main idea is similar. However, it  is much more complicated when the coin-tossing is biased. We have to discuss far more different situations to give precise calculations and probabilistic estimates for  longest consecutive switches  (see Lemma \ref{LEM-S2N} and Theorem \ref{THM-EST-SN1}).

In the first two theorems we give lower and upper bounds for   longest  consecutive switches. Furthermore, we will explain in Remark \ref{sharp} that they are sharp.
\begin{thm}\label{THM01}
Let $\varepsilon$ be any positive number. Then for almost all $\omega\in \Omega$, there exists a finite $N_0=N_0(\omega,\varepsilon)$ such that for $N\geq N_0$
\begin{equation} \label{alpha2}\begin{aligned}
M_N\geq &\left\lfloor \log_{1/\sqrt{pq}} N-\log_{1/\sqrt{pq}}\log_{1/\sqrt{pq}}\log_{1/\sqrt{pq}} N+\log_{1/\sqrt{pq}}\log_{1/\sqrt{pq}} e-\log_{1/\sqrt{pq}}2- 1- \varepsilon\right\rfloor\\
=:&\alpha_1(N).
\end{aligned}
\end{equation}
\end{thm}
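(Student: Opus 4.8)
The plan is to reduce the claim to the existence of a long alternating window and then run an Erd\H{o}s--R\'{e}v\'{e}sz second moment argument along a geometric subsequence. Write $\beta:=1/\sqrt{pq}$, so $\beta\ge 2$ and $\log_{1/\sqrt{pq}}=\log_\beta$, and set $K_N:=\alpha_1(N)+1$; since a run of switches of length $K-1$ is exactly an alternating window of length $K$, proving $M_N\ge\alpha_1(N)$ amounts to producing, for all large $N$, an index $i$ with $S^{(i)}_{K_N}=K_N-1$. A fixed window of length $K$ is alternating with probability $p_K:=2(pq)^{K/2}=2\beta^{-K}$ when $K$ is even and $(pq)^{(K-1)/2}=\beta^{1-K}$ when $K$ is odd; in either case $p_K\ge 2\beta^{-K}$ because $\beta\ge2$, and it is this worst (even) case that forces the additive term $-\log_\beta 2$. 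A direct computation of $\beta^{-K_N}$ from the five summands defining $\alpha_1(N)$ gives, for the number $W_N:=\sum_{i}\mathbf 1\{S^{(i)}_{K_N}=K_N-1\}$ of alternating windows, $\mathbb E[W_N]=(N-K_N+1)p_{K_N}\sim 4\beta^{\varepsilon}\ln\ln N\to\infty$; the constants $\log_\beta\log_\beta e-\log_\beta 2-1$ are tuned precisely so that this expected count grows like a constant times $\beta^{\varepsilon}\ln\ln N$ with the constant large enough to make the argument below summable for every $\varepsilon>0$.

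Next I would control the second moment. Writing $I_i=\mathbf 1\{S^{(i)}_{K}=K-1\}$, the indicators $I_i,I_j$ are independent once $|i-j|\ge K$, while for $0<|i-j|=d<K$ two overlapping alternating windows force their union (an alternating stretch of length $K+d$) to be alternating, so $\mathbb E[I_iI_j]=p_{K+d}\le\beta^{1-K}\beta^{-d}$. Summing the geometric series in $d$ and over the $O(N)$ starting positions yields $\operatorname{Var}(W_N)\le C\,\mathbb E[W_N]$, whence $\mathbb P(W_N=0)\le C/\mathbb E[W_N]\asymp 1/\ln\ln N$. This is the main obstacle: a decay of order $1/\ln\ln N$ is far too slow to be summable along any subsequence fine enough to preserve the sharp triple-logarithmic threshold, and the naive disjoint-block bound is even weaker (it only reaches the coarser level $\log_\beta N-2\log_\beta\log_\beta N$). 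The crux is therefore to upgrade this to exponential decay $\mathbb P(W_N=0)\lesssim e^{-c\,\mathbb E[W_N]}=(\ln N)^{-c'}$ with $c'>1$. I expect to obtain this via a Chen--Stein Poisson approximation applied not to the (heavily clumped) indicators $I_i$ but to the declumped \emph{run-initiation} events, e.g.\ indicators of the form $\{X_{i-1}=X_{i+1},\,S^{(i)}_{K}=K-1\}$ marking a fresh alternating block; for these the Chen--Stein coupling and deletion coefficients $b_1,b_2$ tend to $0$, giving $\mathbb P(W_N=0)\approx e^{-\lambda_N}$ with $\lambda_N\asymp N p_{K_N}$. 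The one genuinely new difficulty relative to the $p=1/2$ case of Hao et al.\ (2020) is that the switch sequence $Y_i:=\mathbf 1\{X_i\neq X_{i+1}\}$ is no longer i.i.d.\ when $p\neq\tfrac12$ (its continuation probability alternates between $p$ and $q$), so the clump-size distribution must be handled through the underlying two-state alternation chain.

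Finally I would assemble the almost-sure statement by a subsequence-plus-monotonicity argument. Fix $\theta>1$ with $\log_\beta\theta<\varepsilon/2$ and set $N_k:=\lfloor\theta^{k}\rfloor$. Running the previous estimate with parameter $\varepsilon/2$ gives $\mathbb P(W_{N_k}=0)\le (\ln N_k)^{-c'}=(k\log\theta)^{-c'}$, which is summable because $c'>1$; by the first Borel--Cantelli lemma, almost surely for all large $k$ the first $N_k$ trials contain an alternating window of length $K_{N_k}$, so $M_{N_k}\ge\alpha_1^{(\varepsilon/2)}(N_k)$. For arbitrary $N\in[N_k,N_{k+1})$, monotonicity of $M_N$ gives $M_N\ge M_{N_k}$, while $\alpha_1$ increases by at most $\log_\beta(N_{k+1}/N_k)+o(1)\le\varepsilon/2$ across the gap; combining the two $\varepsilon/2$ budgets upgrades the bound to $M_N\ge\alpha_1(N)$ for all $N\ge N_0(\omega,\varepsilon)$, which is the assertion. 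The routine parts are the evaluation of $\beta^{-K_N}$ and the variance sum; the Chen--Stein declumping estimate that produces the sharp exponential decay is where the real work lies.
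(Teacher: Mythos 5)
Your calibration of the threshold (namely $N\beta^{-K_N}\gtrsim 2\beta^{\varepsilon}\ln\ln N$ with $\beta:=1/\sqrt{pq}$), your diagnosis that a plain Chebyshev/second-moment bound only gives $P(W_N=0)\lesssim 1/\ln\ln N$ (not summable along any useful subsequence), and your final subsequence-plus-monotonicity assembly are all correct, and the route you propose -- Poisson approximation for declumped run-initiation events -- is genuinely different from the paper's and would in fact succeed: the Chen--Stein neighborhood terms $b_1,b_2$ are of order $N^{-1}$ times logarithmic factors here, hence negligible against $e^{-\lambda_N}\asymp(\ln N)^{-c'}$. The genuine weakness is that your proposal stops exactly at the decisive step: the exponential-decay estimate $P(W_N=0)\lesssim e^{-c\lambda_N}$ is announced (``I expect to obtain this\dots'') rather than proved, and the entire theorem hinges on it. As written, this is a gap, not a proof.

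It is worth knowing that the paper closes this hole with a completely elementary device, no Poisson approximation needed. Its Lemma 3.2 computes exactly, for a window of length $2K$ and even $K$,
\begin{equation*}
P\left(M^{(m)}_{2K}\ge K-1\right)=(K+2)(pq)^{K/2}-2(pq)^{K},
\end{equation*}
by decomposing according to the \emph{first} position inside the window at which an alternating run of length $K$ starts; the factor $K+2$ is precisely the declumping factor you were hoping to extract from Chen--Stein. Then (Theorem 3.1) the windows of length $2K$ at offsets $0,K,2K,\dots$ are split into two interleaved families, each made of independent events, yielding
\begin{equation*}
P\left(M_N<K-1\right)\le\Bigl(1-(K+2)(pq)^{K/2}+2(pq)^{K}\Bigr)^{\frac12\left(\lfloor N/K\rfloor-1\right)}
\le\exp\Bigl(-\bigl(\tfrac12+o(1)\bigr)N(pq)^{K/2}\Bigr)
\end{equation*}
for $K\asymp\log_{\beta}N$, which is exactly the decay $e^{-c\lambda_N}$ you need. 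Note that this rescues the blocking idea you dismissed: blocks of length $K$ indeed fail at the triple-log level (each block contributes only $\asymp\beta^{-K}$, so the exponent $\frac NK\beta^{-K}\to 0$), but blocks of length $2K$ contribute $\asymp K\beta^{-K}$, and that factor $K$ cancels the $1/K$ coming from the number of blocks. With this bound in hand the paper sums over the subsequence $N_j:=\min\{N:\alpha_1(N)=j-1\}$ (on which the target value is constant, so monotonicity of $M_N$ finishes the argument), obtaining a term of order $j^{-\beta^{\varepsilon}}$, summable because $\beta^{\varepsilon}>1$; your geometric subsequence with the two $\varepsilon/2$ budgets would serve equally well once the key estimate is actually established.
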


\begin{thm}\label{THM02}
For almost all $\omega\in \Omega$, there exists an infinite sequence $N_i=N_i(\omega) (i=1,2,...)$ of integers such that
\begin{equation}\label{alpha02}
M_{N_i}<\left\lfloor \log_{1/\sqrt{pq}} N_i-\log_{1/\sqrt{pq}}\log_{1/\sqrt{pq}}\log_{1/\sqrt{pq}} N_i+\log_{1/\sqrt{pq}}\log_{1/\sqrt{pq}} e+1\right\rfloor=:\alpha_2(N_i).
\end{equation}
\end{thm}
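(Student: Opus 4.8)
The plan is to produce the required subsequence via the second Borel--Cantelli lemma applied to \emph{independent} events carried by disjoint blocks of trials, exactly as in the head-run argument of Erd\H{o}s and R\'ev\'esz. Write $\beta:=1/\sqrt{pq}>1$, so that an alternating block of length $\ell$ (that is, $\ell-1$ consecutive switches) beginning at a fixed site has probability $\pi_\ell\asymp\beta^{-\ell}$, the proportionality constant taking one of two values according to the parity of $\ell$ (namely $2\beta^{-\ell}$ for even $\ell$ and $\beta^{1-\ell}$ for odd $\ell$, since along an alternating pattern the one-step switch probability alternates between $p$ and $q$). Fix $\varepsilon>0$ and choose an increasing sequence $v_k$ with $h(k):=\log_\beta v_k$ growing just fast enough that $h(k)-h(k-1)\ge(2+o(1))\log_\beta h(k)$; concretely $h(k)\sim c_\beta\,k\log k$ will do. Put $\ell_k:=\alpha_2(v_k)+1$ and let $A_k$ be the event that the block $B_k:=(v_{k-1},v_k]$ contains no alternating run of length $\ell_k$. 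Since $A_k$ is measurable with respect to the trials in $B_k$, up to a boundary correction of $\ell_k\ll|B_k|$ sites that I would absorb by thinning the blocks, the events $A_k$ are independent.

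The heart of the matter is the estimate $P(A_k)\ge\exp\{-(1+o(1))\lambda_k\}$ with $\lambda_k:=|B_k|\,\pi_{\ell_k}$. Using $|B_k|\sim v_k$ and the definition of $\alpha_2$, a direct computation gives $v_k\beta^{-\alpha_2(v_k)}=(\ln\beta)\,\beta^{-1-\varepsilon}\log_\beta\log_\beta v_k$, whence $\lambda_k\sim c\,\beta^{-2-\varepsilon}\ln h(k)$ and therefore $P(A_k)\gtrsim h(k)^{-\gamma}$ with $\gamma:=c\,\beta^{-2-\varepsilon}$. Since $\beta\ge 2$ and the parity constant satisfies $c\le\beta$, one has $\gamma\le\beta^{-1-\varepsilon}<1$ for every $\varepsilon>0$, and with the chosen growth of $h$ this yields $\sum_k P(A_k)=\infty$. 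Independence and the second Borel--Cantelli lemma then give that, almost surely, $A_k$ occurs for infinitely many $k$. It is precisely this divergence---secured by the extra ``$+1+\varepsilon$'' in $\alpha_2$ rather than the ``$-\log_{1/\sqrt{pq}}2-1-\varepsilon$'' in $\alpha_1$---that separates the present statement from Theorem \ref{THM01}.

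It remains to pass from ``$A_k$ occurs'' to ``$M_{v_k}<\alpha_2(v_k)$''. On $A_k$ there is no long alternating run strictly inside $B_k$, so a run of length $\ell_k$ within $[1,v_k]$ must either lie in $[1,v_{k-1}]$ or straddle the point $v_{k-1}$. The straddling runs are ruled out for all large $k$ by a one-sided Borel--Cantelli estimate (there are only $O(\ell_k)$ of them, of total probability $O(\ell_k\beta^{-\ell_k})$, summable along the subsequence), while the runs inside $[1,v_{k-1}]$ are excluded by the companion almost sure bound $M_{v_{k-1}}\le\log_\beta v_{k-1}+2\log_\beta\log_\beta v_{k-1}$, valid eventually along the subsequence because $\sum_k v_{k-1}\beta^{-(\log_\beta v_{k-1}+2\log_\beta\log_\beta v_{k-1})}=\sum_k h(k-1)^{-2}<\infty$; the gap condition on $h$ is exactly what makes this upper bound smaller than $\alpha_2(v_k)$. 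Intersecting the two almost sure events produces, for a.e. $\omega\in\Omega$, infinitely many $k$ with $M_{v_k}<\alpha_2(v_k)$, and taking $N_i:=v_{k_i}$ finishes the proof. I expect the genuine obstacle to be the sharp lower bound $P(A_k)\ge e^{-(1+o(1))\lambda_k}$ in the regime $\lambda_k\to\infty$: this is a moderate-deviation statement about the longest alternating run, not delivered by a soft Poisson/Chen--Stein approximation, so I would derive it instead from the transfer matrix (equivalently, a renewal recursion) of the run-length chain, where the two-periodic switch probabilities $p,q$ must be tracked carefully in order to pin down the parity-dependent constant $c$ and thereby keep $\gamma<1$ for every $\varepsilon>0$.
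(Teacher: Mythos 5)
Your route is sound and genuinely different from the paper's. You build the subsequence from \emph{disjoint} blocks $(v_{k-1},v_k]$, get independence of the block events (in fact no thinning is needed: an alternating run lying entirely inside $(v_{k-1},v_k]$ is a function of $X_{v_{k-1}+1},\dots,X_{v_k}$ alone, and you handle straddling runs separately anyway), apply the classical second Borel--Cantelli lemma, and then transfer from blocks to prefixes via a first Borel--Cantelli bound on $M_{v_{k-1}}$ together with the gap condition on $h$. The paper instead keeps the events on nested prefixes, $A_j=\{M_{N_j}<\alpha_2(N_j)\}$ with $N_j$ chosen so that $\alpha_2(N_j)=\lfloor j^{1+\delta}\rfloor-1$; these are not independent, so it invokes the Kochen--Stone version of the Borel--Cantelli lemma (Lemma \ref{B_C}) and verifies the correlation condition \eqref{condition2} by factoring $A_j$ into a prefix event $B_{i,j}$, a suffix event $C_{i,j}$ and a negligible boundary event, showing $P(A_iA_j)=P(A_i)P(A_j)(1+o(1))$. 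Your approach buys exact independence and avoids those delicate correlation asymptotics; the price is the extra transfer step (the almost sure prefix bound plus the growth condition on $v_k$), which the paper's formulation does not need.

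The one genuine gap you flag---the lower bound $P(A_k)\ge \exp\{-(1+o(1))\lambda_k\}$---is exactly what the paper supplies as Theorem \ref{THM-EST-SN1}, namely
\begin{equation*}
P(M_N<K-1)\ \ge\ \Bigl(1-(K+1-2Kpq)(pq)^{\frac{K-1}{2}}+(1-2pq)(pq)^{K-1}\Bigr)^{\left\lfloor\frac{N}{K}\right\rfloor-1},
\end{equation*}
proved by an exact computation of $P(M_{2K}\ge K-1)$ on windows of length $2K$ (Lemma \ref{LEM-S2N}, a decomposition into first-occurrence events) combined with a Harris-type positive-correlation product bound; no transfer-matrix or renewal analysis is required, and your dismissal of Chen--Stein is also too pessimistic, since here $\lambda_k$ is only logarithmic in $h(k)$ while the additive Poisson-approximation error is polynomially small in $v_k$. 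Two points of bookkeeping. First, the sharp exponent carries a declumping factor: the paper's bound gives rate $(1+o(1))(1-2pq)\beta N\beta^{-K}$ with $\beta=1/\sqrt{pq}$, and since $(1-2pq)\beta\ge 1$ for all $0<p<1$ (with equality only at $p=1/2$) this is consistent with, and never larger than, the true rate; one checks that in both parities the true rate is at most your $\lambda_k=N\pi_K$, so your claimed inequality is correct, if conservative. Second, taking the floor in $\alpha_2$ on the unfavorable side, your exponent is $\gamma\le c\,\beta^{-1-\varepsilon}\le\beta^{-\varepsilon}$ (or $(1-2pq)\beta^{-\varepsilon}$ if one uses the paper's bound), not $c\,\beta^{-2-\varepsilon}$; this is still strictly less than $1$ for every $\varepsilon>0$, which is all your divergence argument requires, so the conclusion stands.
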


According to Theorems \ref{THM01} and \ref{THM02}, we know that the value of $M_N$ is larger than $\alpha_1$ but in general not larger than $\alpha_2$. In the next  theorem we discuss the largest possible values of $M_N$.

\begin{thm}\label{THM03}
Let $\{\gamma_n\}$ be a sequence of positive numbers.

(i) If  $\sum_{n=1}^{\infty}{(pq)}^{\frac{\gamma_n}{2}}=\infty$, then  for almost all $\omega\in \Omega$, there exists an infinite sequence $N_i=N_i(\omega,\{\gamma_n\}) \ (i=1,2,...)$ of integers such that
$M_{N_i}\geq \gamma_{N_i}-1$.

(ii) If $\sum_{n=1}^{\infty}{(pq)}^{\frac{\gamma_n}{2}}<\infty$, then for almost all $\omega\in \Omega$, there exists a positive integer $N_0=N_0(\omega,\{\gamma_n\})$ such that $M_N< \gamma_N -1$
for all  $N\geq N_0$.
\end{thm}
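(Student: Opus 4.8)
The plan is to deduce both parts from the Borel--Cantelli lemmas, the only probabilistic input being that a prescribed window of $n$ consecutive symbols alternates with probability comparable to $(pq)^{n/2}$: for a fixed starting index $i$ the probability that $X_i,\dots,X_{i+n-1}$ alternate equals $2(pq)^{n/2}$ for even $n$ and $(pq)^{(n-1)/2}$ for odd $n$, hence lies between $(pq)^{n/2}$ and $2(pq)^{(n-1)/2}$ uniformly in $i$. Recall that $M_N\ge n-1$ holds precisely when some window of $n$ consecutive symbols among $X_1,\dots,X_N$ alternates; since $M_N$ is integer valued, $\{M_N\ge\gamma_N-1\}=\{M_N\ge g_N-1\}$ with $g_N:=\lceil\gamma_N\rceil$. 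I would first carry out the argument assuming $\{\gamma_n\}$ nondecreasing (in case (ii) one automatically has $\gamma_n\to\infty$, for otherwise $\sum_n(pq)^{\gamma_n/2}$ would diverge), and treat the general sequence by comparison at the end.

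For part (ii) the naive first Borel--Cantelli fails, because the union bound only gives $P(M_N\ge\gamma_N-1)\le C\,N(pq)^{\gamma_N/2}$ and $\sum_N N(pq)^{\gamma_N/2}$ may diverge under the hypothesis. I would therefore group the indices into dyadic blocks $[2^k,2^{k+1})$: using that $M_N$ and $\gamma_N$ are nondecreasing, every $N$ in this block satisfies $\{M_N\ge\gamma_N-1\}\subseteq\{M_{2^{k+1}}\ge\gamma_{2^k}-1\}$, and the union bound over the at most $2^{k+1}$ windows of length $g_{2^k}$ yields $P(\exists\,N\in[2^k,2^{k+1}):M_N\ge\gamma_N-1)\le C\,2^{k}(pq)^{\gamma_{2^k}/2}$. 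The decisive series estimate is $\sum_k 2^{k}(pq)^{\gamma_{2^k}/2}\le 2\sum_n(pq)^{\gamma_n/2}<\infty$, valid because by monotonicity each $n\in[2^{k-1},2^k)$ contributes $(pq)^{\gamma_n/2}\ge(pq)^{\gamma_{2^k}/2}$ and there are $2^{k-1}$ of them. The first Borel--Cantelli lemma then shows that almost surely only finitely many blocks contain an index with $M_N\ge\gamma_N-1$, which is exactly the assertion $M_N<\gamma_N-1$ for all large $N$.

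For part (i) I would keep the dyadic blocks $I_k=[2^k,2^{k+1})$ but now look for a lower bound on the probability $P(G_k)$ that some window of length $\ell_k:=g_{2^{k+1}}$ alternates inside $I_k$; when $G_k$ occurs one gets $M_{2^{k+1}-1}\ge\ell_k-1\ge\gamma_{2^{k+1}-1}-1$, producing the desired subsequence $N_i=2^{k_i+1}-1$. As the blocks are disjoint the events $G_k$ are independent, so the second Borel--Cantelli lemma applies as soon as $\sum_k P(G_k)=\infty$. The dual series computation gives $\sum_k 2^{k}(pq)^{\ell_k/2}\ge c\sum_n(pq)^{\gamma_n/2}=\infty$, so it remains to show $P(G_k)\ge c\,\min\{1,2^{k}(pq)^{\ell_k/2}\}$. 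Here I would apply the Chung--Erd\"{o}s second--moment inequality to the indicators ``an alternating window of length $\ell_k$ starts at $i$'', $i\in I_k$: the first moment is of order $2^{k}(pq)^{\ell_k/2}$, while the second moment is controlled by the diagonal together with the overlap contribution, which is bounded by the convergent geometric series $\sum_{d\ge0}(pq)^{d/2}$ coming from two overlapping alternating windows merging into a single longer one.

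The crux of the whole argument is this last lower bound in part (i): estimating $P(G_k)$ by counting only disjoint windows would cost a spurious factor $\ell_k\asymp k$ and can turn a divergent series into a convergent one, so the second--moment (Chung--Erd\"{o}s) estimate is essential and is where the precise form of the hypothesis is used. A secondary, purely bookkeeping obstacle is to dispense with the monotonicity of $\{\gamma_n\}$ and to reconcile the real threshold $\gamma_n$ with the integer window length $g_n=\lceil\gamma_n\rceil$; I expect this to require sandwiching $\{\gamma_n\}$ between adapted monotone sequences in each direction and checking that neither the convergence nor the divergence of $\sum_n(pq)^{\gamma_n/2}$ is affected. The same computations also confirm that the exponent $\tfrac{\gamma_n}{2}$ with base $pq$ is the exact threshold, consistent with the sharpness recorded in the Remark following Theorems \ref{THM01} and \ref{THM02}.
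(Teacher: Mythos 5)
Your route is genuinely different from the paper's. The paper never touches $M_N$ or blocks of indices: it proves the reformulated Theorem 2.5*, whose events $A_n=\{S^{(n-\gamma_n)}_{\gamma_n}=\gamma_n-1\}$ (the run \emph{ending} at trial $n$) satisfy $P(A_n)\sim (pq)^{\gamma_n/2}$ term by term; part (ii) is then the first Borel--Cantelli lemma, and part (i) is the Kochen--Stone lemma (Lemma \ref{B_C}) fed with the pairwise estimate $P(A_iA_j)=P(A_i)P(A_j)(1+o(1))$ recycled from the proof of Theorem \ref{THM02}. For \emph{nondecreasing} $\{\gamma_n\}$ your dyadic scheme is correct and in some respects cleaner: part (ii) needs only a union bound and the first Borel--Cantelli lemma, and part (i) replaces the delicate correlation estimates by genuine independence of disjoint blocks plus a Chung--Erd\H{o}s bound inside each block (one minor repair: blocks with $\ell_k>2^k$ have $G_k=\emptyset$, so your lower bound for $P(G_k)$ cannot hold there; but their contribution satisfies $\sum_k 2^k(pq)^{\ell_k/2}\leq \sum_k 2^k (pq)^{2^{k-1}}<\infty$, so discarding them does not affect divergence).

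The genuine gap is the step you call ``purely bookkeeping'': the reduction of general positive $\{\gamma_n\}$ to monotone ones. For part (ii) this reduction is impossible, because the statement for general sequences is actually false. Write $\lambda:=1/\sqrt{pq}$ and $c:=\tfrac12\log_{\lambda}2$, and take $\gamma_n=n$ when $n$ is not a power of $2$ and $\gamma_{2^k}=ck$. Then $\sum_n (pq)^{\gamma_n/2}\leq \sum_n \lambda^{-n}+\sum_k 2^{-k/2}<\infty$, yet Theorem \ref{THM-MN} gives $M_{2^k}=(1+o(1))\log_{\lambda}2^k=(1+o(1))\,2ck$ a.s., so $M_{2^k}\geq ck-1=\gamma_{2^k}-1$ for all large $k$: the conclusion of (ii) fails along $N=2^k$. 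The same example shows concretely why your sandwich cannot work: any nondecreasing minorant $\tilde\gamma_n\leq\gamma_n$ obeys $\tilde\gamma_n\leq\tilde\gamma_{2^k}\leq ck$ for $n\leq 2^k$, whence $\sum_n(pq)^{\tilde\gamma_n/2}\geq\sum_k 2^{k-1}2^{-k/2}=\infty$. Monotonicity is therefore an essential hypothesis for (ii) --- a point the paper itself glosses over when it declares Theorems \ref{THM03} and 2.5* to be reformulations of one another; that unproved equivalence is exactly where the same issue hides. For part (i) the general statement \emph{is} true, but sandwiching fails in the other direction (take $\gamma_n=1$ off powers of $2$ and $\gamma_{2^k}=2^k$: the sum diverges, while any monotone majorant exceeds $n/2$ and yields a convergent sum); the correct repair is not a comparison of sequences but the paper's device of using only the runs ending at trial $n$, whose probabilities reproduce the terms $(pq)^{\gamma_n/2}$ exactly, with no blocking and no monotonization, and to which either Kochen--Stone or a suitably adapted version of your second-moment argument can be applied.
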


The above theorem can be reformulated  to estimate number of  switches as follows:

\noindent {\bf Theorem 2.3*} {\it \label{THM03*}
Let $\{\gamma_n\}$ be a sequence of positive numbers.

(i) If $\sum_{n=1}^{\infty}{(pq)}^{\frac{\gamma_n}{2}}=\infty$,  then for almost all $\omega\in \Omega$, there exists an infinite sequence $N_i=N_i(\omega,\{\gamma_n\}) \ (i=1,2,...)$ of integers such that
$S^{(N_i-\gamma_{N_i})}_{\gamma_{N_i}}\geq \gamma_{N_i}-1$.

(ii) If $\sum_{n=1}^{\infty}{(pq)}^{\frac{\gamma_n}{2}}<\infty$, then for almost all $\omega\in \Omega$, there exists a positive integer $N_0=N_0(\omega,\{\gamma_n\})$ such that $S^{(N-\gamma_N)}_{\gamma_N}<\gamma_N -1$
for all $N\geq N_0$.}


In the end, we give a limit result on the length of the longest consecutive switches.

\begin{pro} \label{THM-MN}
We have
\begin{equation} \label{alpha1}
\lim_{N\to\infty}\frac{M_N}{\log_{1/\sqrt{pq}} N}=1 \quad a.s.
\end{equation}
\end{pro}

\section{Proofs}

\subsection{Proofs for Theorems \ref{THM01} - 2.3*}
The basic idea comes from  Erd\H{o}s and  R\'{e}v\'{e}sz (1975). At first, we give an estimate for the length of longest consecutive switches, which is very useful in our proofs for Theorems \ref{THM01} - 2.3*.

\begin{thm} \label{THM-EST-SN1} Let $N,K\in\mathbb N$ with $N\geq 2K$. Then
\begin{equation}\label{THM121}
\begin{aligned}
\Big(1-(K+1-2Kpq)&(pq)^{\frac{K-1}{2}}+(1-2pq)(pq)^{K-1}\Big)^{
\left\lfloor\frac{N}{K}\right\rfloor-1}    \leq P\Big(M_N<K-1\Big)\\
&\leq\Big(1-(K+2)(pq)^{\frac{K}{2}}+2(pq)^K\Big)^{\frac{1}{2}
\left(\left\lfloor\frac{N}{K}\right\rfloor-1\right)}.
\end{aligned}
\end{equation}
\end{thm}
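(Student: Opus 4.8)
The plan is to derive both inequalities from a single-block estimate and then tensorise it along the sequence, following the scheme of Erd\"os and R\'ev\'esz (1975). Write $m:=\lfloor N/K\rfloor$ and, as in \eqref{switch}, let $p_\ell$ denote the probability that $\ell$ prescribed consecutive trials alternate, so that $p_\ell=(pq)^{(\ell-1)/2}$ for odd $\ell$ and $p_\ell=2(pq)^{\ell/2}$ for even $\ell$; I will use the uniform bound $p_\ell\ge 2(pq)^{\ell/2}$, which comes from $(pq)^{-1/2}\ge 2$. In a fixed block of $2K$ consecutive trials there are exactly $K+1$ windows of length $K$; letting $W_1,\dots,W_{K+1}$ be the events that these windows alternate, the probability that such a block carries no alternating run of length $K$ is $g:=1-P\bigl(\bigcup_i W_i\bigr)$, and the whole theorem reduces to sandwiching $g$ and then exploiting independence (respectively near-independence) of disjoint (respectively overlapping) blocks.

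The combinatorial engine is the behaviour of the pairwise intersections. If two windows sit at distance $d$ with $1\le d\le K-1$ and both alternate, then, sharing at least one site, they force their union of length $K+d$ to alternate, so $P(W_i\cap W_j)=p_{K+d}$; if $d=K$ the windows are disjoint and independent, whence $P(W_i\cap W_j)=p_K^2$. Counting the $K+1-d$ pairs at each distance $d$ turns the Bonferroni sums $S_1:=\sum_i P(W_i)=(K+1)p_K$ and $S_2:=\sum_{i<j}P(W_i\cap W_j)$ into explicit expressions in $(pq)^{1/2}$. I would then invoke the two-sided Bonferroni inequalities $S_1-S_2\le P\bigl(\bigcup_i W_i\bigr)\le S_1-\sum_i P(W_{i-1}\cap W_i)$, the right-hand chain bound following from $P\bigl(W_i\setminus\bigcup_{\ell<i}W_\ell\bigr)\le P(W_i)-P(W_{i-1}\cap W_i)$. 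Feeding in the explicit $S_1,S_2$ and tracking the parity of each merged length $K+d$ is what produces the sharp constants: the second-order lower bound $P\bigl(\bigcup_i W_i\bigr)\ge S_1-S_2$ together with $p_K\ge 2(pq)^{K/2}$ gives the upper estimate $g\le 1-(K+2)(pq)^{K/2}+2(pq)^{K}$, while bounding $P\bigl(\bigcup_i W_i\bigr)$ from above through the merged-run terms $p_{K+d}$ and the disjoint-pair contribution $p_K^2=(pq)^{K-1}$ gives the lower estimate $g\ge 1-(K+1-2Kpq)(pq)^{(K-1)/2}+(1-2pq)(pq)^{K-1}$.

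For the upper bound in \eqref{THM121} I would group the $m$ disjoint length-$K$ blocks into $\lfloor m/2\rfloor$ disjoint length-$2K$ blocks. Since $\{M_N<K-1\}$ forces each of these to contain no alternating run, and disjoint blocks are independent, $P(M_N<K-1)\le g^{\lfloor m/2\rfloor}\le g^{(m-1)/2}$, the last step because $0<g<1$ and $\lfloor m/2\rfloor\ge (m-1)/2$; inserting the upper estimate for $g$ yields the right-hand side of \eqref{THM121}. For the lower bound I would instead cover the trials by the overlapping length-$2K$ blocks $I_j=[(j-1)K+1,(j+1)K]$, write $\{M_N<K-1\}=\bigcap_j C_j$ with $C_j=\{\text{no alternating run of length }K\text{ in }I_j\}$ and $P(C_j)=g$, and expand $P\bigl(\bigcap_j C_j\bigr)=\prod_j P\bigl(C_j\mid C_1\cap\cdots\cap C_{j-1}\bigr)$; bounding each conditional factor below by $g$ and inserting the lower estimate for $g$ produces the power $\lfloor N/K\rfloor-1$ and hence the left-hand side of \eqref{THM121}.

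The main obstacle is precisely this last factorisation: I must show $P\bigl(C_j\mid C_1\cap\cdots\cap C_{j-1}\bigr)\ge g$, i.e. that conditioning on earlier blocks being free of alternating runs does not lower the chance that the current block is, and I must do so while keeping the sharp exponent. For head-runs the analogue is immediate, because ``no long head-run'' is a decreasing event and the product measure is positively associated (FKG); here, however, ``no alternating run'' is neither increasing nor decreasing in the coordinates, so FKG is unavailable, and this is the genuinely new difficulty relative to Erd\"os and R\'ev\'esz (1975). The remedy I would pursue is a local Markov-type argument: condition on the configuration of the length-$K$ overlap that $I_j$ shares with $I_{j-1}$, after which the only new alternating runs recorded by $C_j$ live in the fresh length-$K$ block, which is independent of the entire past, and then verify uniformly over the conditioning that the resulting one-block probability is at least $g$. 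The second, bookkeeping-heavy difficulty is carrying the exact constants through $S_1$ and $S_2$ while the parities of $K$ and of the merged lengths $K+d$ keep switching the two branches of \eqref{switch}; this is what pins down the coefficients $K+1-2Kpq$, $1-2pq$ and $K+2$ and is why they cannot be improved without more work.
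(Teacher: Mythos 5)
Your two-level architecture (disjoint $2K$-blocks plus independence for the upper bound; overlapping $2K$-blocks plus a conditional product for the lower bound) is exactly the paper's, but both of your key technical steps fail as stated. First, the single-block estimates cannot come from second-order Bonferroni, because the theorem's constants leave zero slack. Take $K$ even: the paper's Lemma 3.2 computes the block probability \emph{exactly} as $P\big(\bigcup_i W_i\big)=(K+2)(pq)^{K/2}-2(pq)^K$, which is precisely the constant required on the right-hand side of \eqref{THM121}. Your lower bound $S_1-S_2$, with $S_1=2(K+1)(pq)^{K/2}$ and $S_2=\sum_{d=1}^{K-1}(K+1-d)p_{K+d}+p_K^2$, works out to $(K+2)(pq)^{K/2}-4(pq)^K-\sum_{d=2}^{K-1}(K+1-d)p_{K+d}$, which is \emph{strictly} below the target: already the $d=2$ term $2(K-1)pq\,(pq)^{K/2}$ dwarfs the allowed correction $2(pq)^K$. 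Since the needed inequality is an equality in the even case, no truncation that discards positive pair terms can reach it. Symmetrically, your stated chain bound $S_1-\sum_i P(W_{i-1}\cap W_i)$ gives, for odd $K$, only $(K+1-2Kpq)(pq)^{(K-1)/2}$, missing the required correction $-(1-2pq)(pq)^{K-1}$. The repair is to make the decomposition exact rather than an inequality: with $F_i=W_i\setminus\bigcup_{\ell<i}W_\ell$ one has $W_i\cap W_\ell\subseteq W_i\cap W_{i-1}$ for all overlapping $\ell<i$ (two overlapping alternating windows merge into one alternating run), so $P(F_i)=P(W_i)-P(W_i\cap W_{i-1})$ exactly for $i\le K$, while $P(F_{K+1})$ carries the extra disjoint-pair and triple corrections $-P(W_{K+1}\cap W_1)+P(W_{K+1}\cap W_K\cap W_1)$; summing gives the paper's Lemma 3.2 identity, and one must then also check the cross-parity comparison $(K+2)(pq)^{K/2}-2(pq)^K\le(K+1-2Kpq)(pq)^{(K-1)/2}-(1-2pq)(pq)^{K-1}$ (valid since $pq\le 1/4$) so that both bounds in \eqref{THM121} can be stated with parity-free constants — a step your sketch omits.

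Second, and more seriously, your plan for the factorization $P\big(C_j\mid C_1\cap\cdots\cap C_{j-1}\big)\ge g$ cannot work, because the uniform-over-conditioning claim is false. Consider an overlap configuration $\sigma$ on $[(j-1)K+1,jK]$ whose last $K-1$ symbols alternate but whose first symbol repeats the second. Such $\sigma$ is compatible with $C_{j-1}$ (the alternating run inside $I_{j-1}$ has length only $K-1$) and with all earlier blocks, yet given $\sigma$ the single fresh trial $X_{jK+1}$ completes an alternating $K$-window inside $I_j$ with probability at least $\min(p,q)$, so $P(C_j\mid\sigma)\le 1-\min(p,q)$, which is far below $g=1-O\big(K(pq)^{K/2}\big)$ once $K$ is large. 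The inequality you need is true only after averaging over $\sigma$ under the conditional law — i.e., it is a genuine positive-correlation statement, and as you correctly note FKG is unavailable since "no alternating run" is not monotone. The paper gets around this by working with the complementary events: it asserts $P(D_1\mid B_j)\ge P(D_1)$ for $B_j=\{$a given window alternates$\}$ and, iterating over even-indexed blocks, peels off one factor $P(\overline{C_l})$ at a time to reach $P(M_N<K-1)\ge P(\overline{C_0})^{\lfloor N/K\rfloor-1}$. So the correlation inequality must be formulated for unions of "window alternates" events (conditioning on an alternating window helps overlapping windows alternate), not proved pointwise over non-alternating overlap configurations; your proposed remedy leaves the central hole of the lower bound open.
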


To prove Theorem \ref{THM-EST-SN1}, we need the following lemma.

\begin{lem}\label{LEM-S2N}
Let $K,m\in\mathbb N$ and $M_{2K}^{(m)}$ is defined in (\ref{NUM-SWITCH}). Then
\begin{equation} \label{Lem.32}
P\left(M_{2K}^{(m)}\geq K-1\right)=\left\{
\begin{aligned}
&(K+2)(pq)^{\frac{K}{2}}-2(pq)^K, &\text{if~$K$~is~even},\\
&(K+1-2Kpq)(pq)^{\frac{K-1}{2}}-(1-2pq)(pq)^{K-1}, &\text{if~$K$~is~odd}.
\end{aligned}\right.
\end{equation}
\end{lem}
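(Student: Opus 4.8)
The plan is to compute $P(M_{2K}^{(m)}\ge K-1)$ exactly by an inclusion--exclusion over the $K+1$ possible starting positions of an alternating block of length $K$ inside the window $\{X_m,\dots,X_{m+2K-1}\}$. Since the $X_i$ are i.i.d., the law of $M_{2K}^{(m)}$ does not depend on $m$, so I may fix $m=1$ and work with $X_1,\dots,X_{2K}$. For $i=1,\dots,K+1$ set $B_i:=\{S_K^{(i)}=K-1\}$, the event that the block $X_i,\dots,X_{i+K-1}$ is perfectly alternating; directly from the definition of $M_{2K}^{(1)}$ one has $\{M_{2K}^{(1)}\ge K-1\}=\bigcup_{i=1}^{K+1}B_i$. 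A perfectly alternating block of length $L$ is exactly one of two patterns, and a one-line count of their probabilities gives
\[
a_L:=\begin{cases} 2(pq)^{L/2}, & L \text{ even},\\[2pt] (pq)^{(L-1)/2}, & L \text{ odd},\end{cases}
\]
so in particular $P(B_i)=a_K$.

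The key structural observation, which I would isolate as a sublemma, concerns how these events intersect. If $S\subseteq\{1,\dots,K+1\}$ and every gap between consecutive elements of $S$ is at most $K-1$, then the alternating blocks indexed by $S$ overlap or abut pairwise and their switch-constraints cover every consecutive pair in $[\min S,\max S+K-1]$; hence $\bigcap_{i\in S}B_i$ forces the single long block $X_{\min S},\dots,X_{\max S+K-1}$ to be alternating, so $P\bigl(\bigcap_{i\in S}B_i\bigr)=a_{(\max S-\min S)+K}$, depending on $S$ only through $\max S-\min S$. Because $\max S-\min S\le K$ throughout $\{1,\dots,K+1\}$, the only subset exhibiting a gap of size $K$ (hence a free pair straddling two blocks) is $S=\{1,K+1\}$; there $B_1$ and $B_{K+1}$ are determined by disjoint coordinates, giving $P(B_1\cap B_{K+1})=a_K^2$ rather than $a_{2K}$.

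Next I would run the inclusion--exclusion, grouping subsets $S$ by the pair $(\alpha,\beta)=(\min S,\max S)$. For fixed $\alpha<\beta$ the interior indices range over a set of size $\beta-\alpha-1$, and the signed count $\sum_{T\subseteq\{\alpha+1,\dots,\beta-1\}}(-1)^{|T|+1}=-(1-1)^{\beta-\alpha-1}$ vanishes unless $\beta=\alpha+1$, in which case it equals $-1$; the diagonal terms $\alpha=\beta$ each contribute $+a_K$. Since every group with $\beta\ge\alpha+2$ collapses, summing over all subsets while (incorrectly) applying the single-block rule even to $\{1,K+1\}$ yields $(K+1)a_K-Ka_{K+1}$. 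This assigns $\{1,K+1\}$ the fictitious value $a_{2K}$ (contributing $-a_{2K}$), whereas its true probability is $a_K^2$ (contributing $-a_K^2$); replacing the former by the latter adds $a_{2K}-a_K^2$, so that
\[
P\bigl(M_{2K}^{(1)}\ge K-1\bigr)=(K+1)\,a_K-K\,a_{K+1}+a_{2K}-a_K^2 .
\]

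Finally I would substitute the explicit values of $a_K$, $a_{K+1}$ and $a_{2K}$, treating $K$ even and $K$ odd separately, and simplify; a short computation reproduces the two branches of \eqref{Lem.32}, and one concludes for general $m$ by stationarity. I expect the main obstacle to lie in the combinatorial bookkeeping of the middle step --- precisely, establishing the merging sublemma and verifying that $\{1,K+1\}$ is the \emph{unique} subset whose blocks fail to merge --- rather than in the final algebra, which is routine. The binomial cancellation is what makes the alternating sum telescope to just the diagonal and nearest-off-diagonal terms, and the small cases $K=1,2$ should be checked directly to confirm that the formula holds with no further boundary correction.
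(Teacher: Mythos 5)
Your proposal is correct, and it takes a genuinely different route from the paper's. The paper avoids inclusion--exclusion altogether: it decomposes $\{M_{2K}\ge K-1\}$ into \emph{disjoint} events $F_1,\dots,F_{K+1}$, where $F_i$ says the \emph{first} alternating block of length $K$ starts at position $i$ (for $2\le i\le K$ this is enforced by the single non-switch $X_{i-1}=X_i$; for $i=K+1$ one must additionally require $M_K<K-1$), computes each $P(F_i)$ directly, and sums using $p+q=1$. Your route instead keeps the overlapping events $B_i$ and tames the $2^{K+1}-1$ inclusion--exclusion terms with two observations, both of which are valid: the merging sublemma (if all consecutive gaps in $S$ are at most $K-1$, the alternation constraints cover every adjacent pair in the spanned interval, so the blocks fuse and $P(\bigcap_{i\in S}B_i)=a_{\max S-\min S+K}$, the unique exception in $\{1,\dots,K+1\}$ being $S=\{1,K+1\}$, where the blocks occupy disjoint coordinates and independence gives $a_K^2$), and the binomial cancellation that kills every group with $\max S-\min S\ge 2$. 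I verified your closed form $(K+1)a_K-Ka_{K+1}+a_{2K}-a_K^2$: substituting the values of $a_K,a_{K+1},a_{2K}$ for $K$ even and $K$ odd reproduces exactly the two branches of \eqref{Lem.32}, and the degenerate case $K=1$ (where the adjacent pair $\{1,2\}$ coincides with the exceptional pair) also checks out, giving probability $1$. Comparing the two: the paper's first-occurrence decomposition needs no cancellation and each term is elementary, but the care goes into the disjointness of the $F_i$ (especially the extra clause $M_K<K-1$ in $F_{K+1}$); your argument concentrates the work in the combinatorial sublemma, after which the answer emerges as a transparent four-term formula whose only probabilistic input is the single-block probability $a_L$, with all parity case analysis deferred to the final substitution.
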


\begin{proof} Since $M_{2K}^{(m)},m\in\mathbb N$ are identically distributed, it is sufficient to consider the case where $m=1$.
For $i=1,\ldots,K+1$, denote
\begin{equation*}
\begin{aligned}
F_{i}:=&\{(X_{i},\cdots,X_{K+i-1})\ \mbox{is  the first  section of consecutive switches of length}\ K-1\\
&\quad\quad\quad\quad\quad\quad\quad\quad\   \mbox{in the sequence}\ (X_{1},\cdots,X_{2K})\}.
\end{aligned}
\end{equation*}
It is equivalent to
\begin{equation*}
\begin{aligned}
F_{1}=&\{M_K= K-1\},\\
F_{i}=&\{M_K^{(i)}= K-1,
 X_{i-1}=X_{i}\},~~i=2,\ldots,K,\\
F_{K+1}=&\{M_K^{(K+1)}= K-1,
 X_{i-1}=X_{i},M_K< K-1\}.
\end{aligned}
\end{equation*}
Then we have the decomposition
\begin{equation}\label{MFi}
\{M_{2K}\geq K-1\}=\bigcup_{i=1}^{K+1}F_{i},
\end{equation}
where $F_{i}\cap F_{j}=\emptyset,~\forall i\ne j$, and
\begin{equation}\label{F1p}
P(F_{1}) =\left\{
\begin{aligned}
&2(pq)^{\left\lfloor{K}/{2}\right\rfloor}, &\text{if~$K$~is~even},\\
&(pq)^{\left\lfloor {K}/{2}\right\rfloor}, &\text{if~$K$~is~odd},
\end{aligned}\right.~~~~~~~~~~~~~~~~~~~~~~~~~~~~~
\end{equation}

\begin{equation}\label{Fip}
P(F_{i}) =\left\{
\begin{aligned}
&(pq)^{\left\lfloor{K}/{2}\right\rfloor}, &\text{if~$K$~is~even},\\
&(p^2+q^2)(pq)^{\left\lfloor{K}/{2}\right\rfloor}, &\text{if~$K$~is~odd},
\end{aligned}\right.~~i=2,\ldots,K,
\end{equation}

\begin{equation}\label{Fk+1p}
P(F_{K+1}) =\left\{
\begin{aligned}
&(pq)^{\left\lfloor{K}/{2}\right\rfloor}\big(1-2(pq)^{\left\lfloor{K}/{2}\right\rfloor}\big), &\text{if~$K$~is~even},\\
&(pq)^{\left\lfloor{K}/{2}\right\rfloor}(p^2+q^2)\big(1-(pq)^{\left\lfloor{K}/{2}\right\rfloor}\big), &\text{if~$K$~is~odd}.
\end{aligned}\right.~~~~
\end{equation}
Plugging the above equalities into \eqref{MFi} and by $p+q=1$ we  obtain  \eqref{Lem.32}.
\end{proof}

\begin{proof}[{\bf Proof of Theorem \ref{THM-EST-SN1}}]
Let $N,K\in\mathbb N$ with $N\geq 2K$. Denote
\[
  B_j=\left\{M_K^{(j+1)}=K-1\right\},~j=0,1,\ldots,N-K.
\]
Then we have the decomposition
\begin{eqnarray*}
&&C_l:= \left\{M_{2K}^{(lK+1)}\geq K-1\right\}=\bigcup^{(l+1)K}_{j=lK}B_j,~l=0,1,\ldots,\lfloor(N-2K)/{K}\rfloor,
\end{eqnarray*}
\begin{equation*}
\left\{ M_N\geq K-1\right\}=\bigcup^{\left\lfloor\frac{N-2K}{K}\right\rfloor}_{l=0}C_l.
\end{equation*}
Let
\begin{equation}\label{D_01}
\begin{aligned}
D_0:=&C_0\cup C_2\cup\cdots\cup C_{2\left\lfloor\frac{1}{2}\left\lfloor\frac{N-2K}{K}\right\rfloor\right\rfloor},~~
D_1:=C_1\cup C_3\cup\cdots\cup C_{2\left\lfloor\frac{1}{2}(\left\lfloor\frac{N-2K}{K}\right\rfloor-1)\right\rfloor+1}.
\end{aligned}
\end{equation}
By the independence of events $C_0,C_2,\ldots,C_{2\left\lfloor\frac{1}{2}\left\lfloor\frac{N-2K}{K}\right\rfloor\right\rfloor}$, Lemma \ref{LEM-S2N}, and the independence of events $C_1,C_3,\ldots,C_{\left\lfloor\frac{1}{2}\left\lfloor\frac{N-2K}{K}\right\rfloor\right\rfloor+1}$, we have
\begin{equation*}\label{D0}
P(\overline{D_0})={P(\overline {C_0})}^{\left\lfloor\frac{1}{2}\left\lfloor\frac{N-2K}{K}\right\rfloor\right\rfloor+1},~~
P(\overline{D_1})=   P(\overline {C_0})^{\left\lfloor\frac{1}{2}\left(\left\lfloor\frac{N-2K}{K}\right\rfloor-1\right)\right\rfloor+1 }.
\end{equation*}
Observing that $\{ M_N\geq K-1\}=D_0\cup D_1$, it is obvious that
\begin{equation}\label{rest}
P(M_N<K-1)\leq \min \left\{P(\overline {D_0}),P(\overline {D_1})\right\}\leq P(\overline {C_0})^{(\lfloor N/K\rfloor-1)/2}.
\end{equation}
Moreover, it can be obtained that  for any   $j=0,1,\ldots,N-K$,
\[
P(D_1\mid B_{j})\geq P(D_1).
\]
 Similarly,  for each even $l_0=2,4,\cdots,2\left\lfloor\frac{1}{2}\left\lfloor\frac{N-2K}{K}\right\rfloor\right\rfloor$,
\[
P\big( D_1\bigcup_{{l~ \text{even},~0\leq l\leq l_0-2}}C_l\mid C_{l_0}\big)\geq P\big(D_1\bigcup^{}_{{l~ \text{even},~0\leq l\leq l_0-2}}C_l\big),
\]
equivalently,
\[
P\big(\overline{ D_1}\bigcap_{{l~ \text{even},~0\leq l\leq l_0-2}}\overline{C_l}\mid\overline{C_{l_0}}\big)\geq P\big(\overline{D_1}\bigcap_{{l~ \text{even},~0\leq l\leq l_0-2}} \overline{C_l}\big),
\]
which implies that
\begin{equation}\label{cont-p}
P\big(\overline{ D_1}\bigcap_{{l~ \text{even},~0\leq l\leq l_0}}\overline{C_l}\big)\geq P\big(\overline{D_1}\bigcap_{{l~ \text{even},~0\leq l\leq l_0-2}} \overline{C_l}\big) P(\overline{C_{l_0}}).
\end{equation}
Based on \eqref{cont-p} and the independence of events $C_0,C_2,\ldots,C_{2\left\lfloor\frac{1}{2}\left\lfloor\frac{N-2K}{K}\right\rfloor\right\rfloor}$
\begin{eqnarray}\label{lest}
P(M_N<K-1)& = &P\big(\overline{D_1}~\overline{D_0}\big) =P\big(\overline{D_1} \bigcap_{{l~ \text{even},~0\leq l\leq \left\lfloor\frac{N-2K}{K}\right\rfloor}} \overline{C_l} \big)\nonumber\\
 & \geq & P\big(\overline{D_1} \bigcap_{{l~ \text{even},~0\leq l\leq \left\lfloor\frac{N-2K}{K}\right\rfloor-2}} \overline{C_l} \big )P\big(\overline{C_{2\left\lfloor\frac{1}{2}\left\lfloor\frac{N-2K}{K}\right\rfloor\right\rfloor}}\big)\nonumber\\
 & \geq &\cdots \nonumber\\
 &\geq & P(\overline{D_1})\prod_{ {l~ \text{even},~0\leq l\leq \left\lfloor\frac{N-2K}{K}\right\rfloor} }  P(\overline{C_l})\nonumber\\
 & = &P(\overline{D_1})P(\overline{D_0})\nonumber\\
 & \geq &P(\overline{C_0})^{\lfloor N/K\rfloor-1}.
\end{eqnarray}
 Besides, by $pq\leq 1/4$ we consider in \eqref{Lem.32}  for any integer $K\geq 2$
\[
 (K+2)(pq)^{\frac{K}{2}}-2(pq)^K \leq
(K+1-2Kpq)(pq)^{\frac{K-1}{2}}-(1-2pq)(pq)^{K-1}.
\]
Plugging \eqref{Lem.32} into \eqref{rest} and \eqref{lest}, we complete the proof.
\end{proof}

Similarly as in Hao et al. (2021), we present the following result to prove Theorem 2.2.
\begin{lem}\label{Lem01}
Let $\{\alpha_j,~j\geq 1\}$ be a sequence of positive numbers. Suppose that $\lim\limits_{j\to \infty}\alpha_j=a>0$. Then we have
\begin{equation}\label{conv1}
\sum_{j=1}^{\infty}\alpha_j^{-\log_{1/\sqrt{pq}} j}<\infty~~\text{if $a>\frac{1}{\sqrt{pq}}=\lambda$},
\end{equation}
and
\begin{equation}\label{disconv1}
\sum_{j=1}^{\infty}\alpha_j^{-\log_{1/\sqrt{pq}} j}=\infty~~\text{if $a<\frac{1}{\sqrt{pq}}=\lambda$}.
\end{equation}
\end{lem}

\begin{rem}  \label{cong-wor}
It is well known  that when  $\lim\limits_{j\to \infty}\alpha_j=\lambda$, one cannot judge if $\sum_{j=1}^{\infty}\alpha_j^{-\log_{1/\sqrt{pq}}j}$ converges or not. For instance, if we take $\alpha_j=\big(j(\log_{1/\sqrt{pq}}j)^p\big)^{\frac{1}{\log_{1/\sqrt{pq}}j}}$  satisfying  $\lim_{j\to\infty}\alpha_j=\lambda $ for $p>0$, then  $\sum_{j=1}^{\infty}\alpha_j^{-\log_{1/\sqrt{pq}} j}$ converges if $p>1$ and diverges if $0<p\leq1$.
\end{rem}

\begin{proof} [{\bf Proof of Theorem \ref{THM01}}]
Let $N_j$ be the smallest integer with $\alpha_1(N_j)=j-1$. Then
\begin{eqnarray}\label{Theorem-st1}
\sum_{j=1}^{\infty}P\left(M_{N_j}<\alpha_1(N_j)\right)
&\leq & \sum_{j=1}^{\infty} \left(1-(j+2)(pq)^{{j}/{2}}+2(pq)^j\right)^{(\lfloor{N_j}/{j}\rfloor-1)/2}\nonumber\\
&\lesssim & \sum_{j=1}^{\infty}\left(1-(j+2)(pq)^{{j}/{2}}+2(pq)^j\right)^{{N_j}/{(2j)}}\nonumber\\
&=: & \sum_{j=1}^{\infty} e_j^{-{\{(j+2)(pq)^{{j}/{2}}-2(pq)^j\}}N_j/{(2j)}},
\end{eqnarray}
with  $e_j:=\big (1-(j+2)(pq)^{{j}/{2}}+2(pq)^j\big)^{\{2(pq)^j-(j+2)(pq)^{{j}/{2}}\}^{-1}}$.  The denotation $x\lesssim y $ means that there exists a constant $C$ independent of all variables  such that $x\leq Cy$.  The first inequality in \eqref{Theorem-st1} follows by the  inequality on the right hand side of  \eqref{THM121}. The second inequality in (\ref{Theorem-st1}) holds since by $pq\leq \frac 14$, when $j\to\infty$ we have $0\leq(j+2)(pq)^{{j}/{2}}-2(pq)^j\to 0$, and it implies that $ 1- (j+2)(pq)^{{j}/{2}}+2(pq)^j\in[1/2,1]$ for large $j$.
Moreover, by  $\alpha_1(N_j)=j-1$  we have
\begin{equation*}
 j\leq \log_{1/\sqrt{pq}} N_j-\log_{1/\sqrt{pq}}\log_{1/\sqrt{pq}}\log_{1/\sqrt{pq}} N_j+\log_{1/\sqrt{pq}}\log_{1/\sqrt{pq}} e-\log_{1/\sqrt{pq}} 2-\varepsilon,
 \end{equation*}
from which for large $j$
\begin{eqnarray*}
{N_j}\cdot{(pq)^{\frac{j}{2}}}\geq 2 \lambda^{\varepsilon}\cdot \ln \lambda \cdot\log_{1/\sqrt{pq}}{\log_{1/\sqrt{pq}}{N_j}}\geq 2 \lambda^{\varepsilon} \cdot\ln \lambda \cdot\log_{1/\sqrt{pq}}j.
\end{eqnarray*}
Then in \eqref{Theorem-st1} we get
\begin{eqnarray}\label{reason01}
 \sum_{j=1}^{\infty}P\left(M_{N_j}<\alpha_1(N_j)\right)
\leq  C\sum_{j=1}^{\infty} {e_j}^{- \left\{\left(\lambda^\epsilon  \ln \lambda \cdot {(j+2-2(pq)^{{j}/{2}})} /{j}\right)\cdot\log_{1/\sqrt{pq}} j\right\} },
\end{eqnarray}
where $\lim\limits_{j\to\infty}e_j=e$.  Again by $pq\leq1/4$ for  any $\varepsilon>0$,
\begin{eqnarray}\label{reason02}
 \lim_{j\to\infty} {e_j}^{\lambda^\epsilon  \ln \lambda \cdot{(j+2-2(pq)^{{j}/{2}})}/ {j}}
 = \lambda^{\lambda^\varepsilon}> \lambda.
\end{eqnarray}
Then by Lemma $\ref{Lem01}$ and the Borel-Cantelli lemma, we complete the proof.
\end{proof}

To prove Theorem $\ref{THM02}$, we need the following version of Borel-Cantelli lemma.

\begin{lem}{\rm(Erd\H{o}s and R\'{e}v\'{e}sz (1975), Lemma A)}\label{B_C}
Let $A_1,A_2,\ldots$ be arbitrary events, satisfying the conditions
$
\sum_{n=1}^{\infty}P(A_n)=\infty
$
and
\begin{eqnarray}\label{condition2}
\liminf_{n\to \infty}\frac{\sum_{1\leq k<l\leq n}P(A_kA_l)}{\sum_{1\leq k<l\leq n}P(A_k)P(A_l)}=1.
\end{eqnarray}
Then  $P\left(\limsup\limits_{n\to\infty}A_n\right)=1$.
\end{lem}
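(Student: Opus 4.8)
The plan is to prove this via the second moment method (the Cauchy--Schwarz / Paley--Zygmund inequality), which is the standard route to a Kochen--Stone type lemma. First I would introduce the counting variables $S_{m,n}:=\sum_{k=m}^{n}\mathbf 1_{A_k}$ and record the elementary fact that for any nonnegative random variable $X$, applying Cauchy--Schwarz to $X=X\mathbf 1_{\{X>0\}}$ gives $(EX)^2\le E[X^2]\,P(X>0)$, hence
\begin{equation*}
P(X>0)\ge \frac{(EX)^2}{E[X^2]}.
\end{equation*}
Applying this to $X=S_{1,n}=:S_n$ reduces the whole problem to estimating the first two moments of $S_n$.

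Next I would compute $E S_n=a_n$ with $a_n:=\sum_{k=1}^{n}P(A_k)$ and $E S_n^2=a_n+2b_n$ with $b_n:=\sum_{1\le k<l\le n}P(A_kA_l)$, and relate the denominator $c_n:=\sum_{1\le k<l\le n}P(A_k)P(A_l)$ appearing in \eqref{condition2} to $a_n$ through $2c_n=a_n^2-\sum_{k\le n}P(A_k)^2$. Since $0\le\sum_{k\le n}P(A_k)^2\le a_n$ and $a_n\to\infty$ by the first hypothesis, this yields $2c_n=a_n^2(1+o(1))$. Choosing a subsequence $n_j$ along which $b_{n_j}/c_{n_j}\to1$ (which exists by \eqref{condition2}) and dividing numerator and denominator of $a_n^2/(a_n+2b_n)$ by $a_n^2$, I obtain
\begin{equation*}
\frac{(E S_{n_j})^2}{E S_{n_j}^2}=\frac{1}{a_{n_j}^{-1}+(2c_{n_j}/a_{n_j}^2)(b_{n_j}/c_{n_j})}\longrightarrow\frac{1}{0+1\cdot1}=1,
\end{equation*}
so $P(S_{n_j}>0)\to1$. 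Because $P(S_n>0)=P\big(\bigcup_{k=1}^{n}A_k\big)$ is nondecreasing in $n$, this forces $P\big(\bigcup_{k\ge1}A_k\big)=1$.

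To upgrade ``at least one $A_k$ occurs'' to ``infinitely many occur'', I would rerun the same argument on every tail: for fixed $m$ apply the moment bound to $S_{m,n}$. The first hypothesis is clearly inherited, while the second is inherited because deleting the indices below $m$ changes $b_n$ and $c_n$ each by at most $(m-1)a_n$, which is $o(c_n)$ since $c_n\sim a_n^2/2\to\infty$; hence the tail ratio still admits a subsequence tending to $1$, and $a_n^{(m)}=a_n-O(1)\to\infty$. This gives $P\big(\bigcup_{k\ge m}A_k\big)=1$ for every $m$, and since $\limsup_{n}A_n=\bigcap_{m}\bigcup_{k\ge m}A_k$ is a decreasing intersection of probability-one events, $P\big(\limsup_{n}A_n\big)=1$.

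The routine parts are the moment identities and the asymptotics $2c_n=a_n^2(1+o(1))$. The step I expect to require the most care is the transfer of condition \eqref{condition2} to the tails: one must verify quantitatively that removing finitely many indices perturbs both $b_n$ and $c_n$ only by an amount negligible against $c_n$, so that the value $1$ of the liminf is preserved for each tail sequence and the second moment bound can be applied there.
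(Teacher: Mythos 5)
Your proof is correct, but there is nothing in the paper to compare it against: the paper states this lemma with a citation to Kochen and Stone (1964) and gives no proof of its own. Your argument is the standard second-moment (Paley--Zygmund) proof of the Kochen--Stone lemma: the bound $P(X>0)\ge (EX)^2/E[X^2]$ applied to the partial counts, the asymptotics $2c_n=a_n^2(1+o(1))$ forced by $\sum_n P(A_n)=\infty$, and the passage to tails. The one delicate point --- transferring condition \eqref{condition2} to the tail sequences by showing that discarding the indices below a fixed $m$ perturbs both $\sum_{1\le k<l\le n}P(A_kA_l)$ and $\sum_{1\le k<l\le n}P(A_k)P(A_l)$ by at most $(m-1)\sum_{k\le n}P(A_k)$, which is $o\big(\sum_{1\le k<l\le n}P(A_k)P(A_l)\big)$ --- is handled correctly, so the argument is complete and would serve as a self-contained substitute for the external citation.
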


\begin{proof}[{\bf Proof of Theorem \ref{THM02}}]
For $\delta>0$, let $N_j=N_j(\delta)$ be the smallest integer for which $\alpha_2(N_j)=\lfloor j^{1+\delta}\rfloor-1$ with $\alpha_2(N_j)$ given by $(\ref{alpha02})$.
Set
\begin{equation}\label{A_j}
A_j=\left\{M_{N_j}<\alpha_2(N_j)\right\},~~j\geq1.
\end{equation}
 By Theorem \ref{THM-EST-SN1}, we have
\begin{eqnarray}\label{proof-thm3.3-1}
\sum_{j=1}^{\infty}P(A_j)
&\geq&\sum_{j=1}^{\infty}\Big(1-(\lfloor j^{1+\delta}\rfloor+1-2\lfloor j^{1+\delta}\rfloor pq)(pq)^{\frac{\lfloor j^{1+\delta}\rfloor -1}{2}}+(1-2pq)(pq)^{\lfloor j^{1+\delta}\rfloor-1}   \Big)^{\big\lfloor\frac{N_j}{\lfloor j^{1+\delta}\rfloor}\big\rfloor-1}\nonumber\\
&\geq&\sum_{j=1}^{\infty}\Big(1-(\lfloor j^{1+\delta}\rfloor +1-2\lfloor j^{1+\delta}\rfloor pq)(pq)^{\frac{\lfloor j^{1+\delta}\rfloor -1}{2}} \Big)^{\frac{N_j}{\lfloor j^{1+\delta}\rfloor}}\nonumber\\
&=:&\sum_{j=1}^{\infty}f_j^{-V_j\cdot \frac{N_j}{\lfloor j^{1+\delta}\rfloor}},
\end{eqnarray}
where $f_j=(1-V_j )^{-{V_j}^{-1}}$ with
$V_j:=(\lfloor j^{1+\delta}\rfloor +1-2\lfloor j^{1+\delta}\rfloor pq)(pq)^{{(\lfloor j^{1+\delta}\rfloor-1)}/{2}}$.
By  $\alpha_2(N_j)=\lfloor j^{1+\delta}\rfloor -1$ we have
\[
\log_{1/\sqrt{pq}} N_j-\log_{1/\sqrt{pq}}\log_{1/\sqrt{pq}}\log_{1/\sqrt{pq}} N_j+\log_{1/\sqrt{pq}}\log_{1/\sqrt{pq}} e+1\leq\lfloor j^{1+\delta}\rfloor,
\]
which implies that
\begin{equation}\label{es-1}
N_j\cdot (pq)^{\frac{\lfloor j^{1+\delta}\rfloor -1}{2}}\leq {\ln{\lambda} \cdot\log_{1/\sqrt{pq}}\log_{1/\sqrt{pq}} N_j} .
\end{equation}
Define $0<\varepsilon_0<1$ satisfying for any $j\geq 1$,
$$
\lfloor j^{1+\delta}\rfloor=\alpha_2(N_j)+1>\varepsilon_0\log_{1/\sqrt{pq}} N_j.
$$
Then we  have
\begin{equation}\label{proof-thm3.3-5}
\log_{1/\sqrt{pq}}\log_{1/\sqrt{pq}} N_j\leq (1+\delta)\log_{1/\sqrt{pq}} j-\log_{1/\sqrt{pq}}\varepsilon_0.
\end{equation}
Plugging \eqref{es-1} and \eqref{proof-thm3.3-5} into (\ref{proof-thm3.3-1}) we get
\begin{equation}\label{proof-thm3.3-6}
\sum_{j=1}^{\infty}P(A_j)
\geq\sum_{j=1}^{\infty}f_j^{-\big\{\ln{\lambda}\cdot{\frac{\lfloor j^{1+\delta}\rfloor+1-2\lfloor j^{1+\delta}\rfloor pq}{\lfloor j^{1+\delta}\rfloor}
\cdot\frac{(1+\delta)\log_{1/\sqrt{pq}} j-\log_{1/\sqrt{pq}}{\varepsilon_0}}{\log_{1/\sqrt{pq}} j }}\cdot\log_{1/\sqrt{pq}} j\big\}}.
\end{equation}
 As in (\ref{Theorem-st1}), we also have $ \lim_{j\to\infty}f_j=e$ and
\begin{equation*}\label{proof-thm3.3-7}
\lim_{j\to\infty}f_j^{{\ln{\lambda}}\cdot{\frac{\lfloor j^{1+\delta}\rfloor+1-2\lfloor j^{1+\delta}\rfloor pq}{\lfloor j^{1+\delta}\rfloor}
\cdot\frac{(1+\delta)\log_{1/\sqrt{pq}} j-\log_{1/\sqrt{pq}}{\varepsilon_0}}{\log_{1/\sqrt{pq}} j }}}=
\lambda ^{(1+\delta)(1-2pq)}<\lambda,
\end{equation*}
by choosing  $\delta>0$ small enough.  Then by \eqref{disconv1} we get
\begin{equation}\label{con-of-An}
\sum_{n=1}^{\infty}P(A_n)=\infty.
\end{equation}
Recall that $\{A_j,~j\geq1\}$ are defined in (\ref{A_j}). For $i<j$, we define
\begin{eqnarray*}
&&B_{i,j}=
\left\{
\begin{array}{cl}
\{M_{N_i}<\alpha_2(N_j)\},&\ \mbox{if}\ N_i\geq \alpha_2(N_j),\\
\Omega,&\ \mbox{otherwise},
\end{array}
\right.\\
&&C_{i,j}=\left\{M_{N_j-N_i}^{(N_i+1)}<\alpha_2(N_j)\right\}.
\end{eqnarray*}
We claim that
\begin{eqnarray}\label{proof-thm3.3-8}
P(A_j)=P(B_{i,j})P(C_{i,j})(1-o(1))\ \ \mbox{as}\ i<j\to\infty.
\end{eqnarray}
In fact, by the definitions of $A_j, B_{i,j}$ and $C_{i,j}$,  we know that the events $B_{ij}$ and $C_{ij}$ are independent and
\[
A_j=B_{i,j}\cap C_{i,j}\cap \left\{M^{(\{N_i-\alpha_2(N_j)\}^++1)}_{2\alpha_2(N_j)}<\alpha_2(N_j)\right\}
\]
with $x^+=x\vee 0$.
By Lemma \ref{LEM-S2N} we have
\begin{eqnarray}\label{proof-thm3.3-8-1}
P\left\{M^{(\{N_i-\alpha_2(N_j)\}^++1)}_{2\alpha_2(N_j)}<\alpha_2(N_j)\right\}&=&
P\left\{M_{2\alpha_2(N_j)}<\alpha_2(N_j)\right\} \nonumber \\
&\geq&P\left\{M_{2\alpha_2(N_j)}<\alpha_2(N_j)-1\right\}  \nonumber  \\
&=&1-o(1)    \ \mbox{as}\ j\to\infty.
\end{eqnarray}
Then \eqref{proof-thm3.3-8} holds immediately by \eqref{proof-thm3.3-8-1} and the independence of events $B_{ij}$ and $C_{ij}$.

In a similar way  we also have
\begin{eqnarray}\label{proof-thm3.3-11}
P(A_iA_j)=P(A_i)P(C_{i,j})(1-o(1))\ \mbox{as}\ i<j\to\infty.
\end{eqnarray}
For $N_i\geq \alpha_2(N_j)$ and $i<j$, by Theorem \ref{THM-EST-SN1}  for sufficiently large $j$
\[
P(B_{i,j})\geq P\left(M_{N_{j-1}}<\alpha_2(N_{j})\right)
\geq \left(1-W_{j}\right)^
{\big\lfloor\frac{N_{j-1}}{\alpha_2(N_j)+1}\big\rfloor-1}
\]
with $W_j:=(1-2pq)(pq)^{\alpha_2(N_j)/2}\alpha_2(N_j)\to0$ and $(1-W_j)^{1/{W_j}}\to e^{-1}$ as $j\to \infty$. Let $\lambda=1/\sqrt{pq}$.
The definition of $\alpha_2(N_j)$, the equality $\alpha_2(N_j)=\lfloor j^{1+\delta}\rfloor -1$, and \eqref{proof-thm3.3-5} together imply
\[
N_{j-1}\leq \log_{\lambda} \log_{\lambda} {N_{j-1}}\cdot \lambda^{\lfloor(j-1)^{1+\delta}\rfloor}\leq \log_{\lambda}{\{(j-1)^{1+\delta} /\varepsilon_0\}} \cdot\lambda^{\lfloor(j-1)^{1+\delta}\rfloor}
\]
for any $j$. Hence
\begin{eqnarray}\label{proof-thm3.3-9-c}
P(B_{i,j})&\geq &e^{-\lim\limits_{j\to\infty}W_j (\lfloor\frac{N_{j-1}}{\alpha_2(N_j)+1}\rfloor-1)}
= e^{-(1-2pq) \lim\limits_{j\to\infty} \frac{{N_{j-1}}}{\lambda^{\alpha_2(N_j)}}   }   \nonumber\\
&\geq &e^{-(1-2pq) \lim\limits_{j\to\infty}   \frac  { \log_{\lambda}{\{(j-1)^{1+\delta} /\varepsilon_0\}} }
{\lambda^{\{\lfloor j^{1+\delta}\rfloor-\lfloor(j-1)^{1+\delta}\rfloor+1\}} }   }      \nonumber\\
&\to&1\ \mbox{as}\ j\to\infty.
\end{eqnarray}
According to \eqref{proof-thm3.3-8}, \eqref{proof-thm3.3-11} and \eqref{proof-thm3.3-9-c} we easily have
\begin{eqnarray}\label{proof-thm3.3-12}
P(A_iA_j)={P(A_i)P(A_j)}(1+o(1))\ \mbox{as}\ i<j\to\infty.
\end{eqnarray}
Lemma \ref{B_C} holds by \eqref{con-of-An} and \eqref{proof-thm3.3-12}. We complete the proof.
\end{proof}

\begin{rem}  \label{sharp}
(i)~We conclude that the bounds given in \eqref{alpha2} and \eqref{alpha02} are sharp using our methods.  On the one hand, suppose that there exists  constant $a\geq0$ satisfying that  for almost all $\omega\in \Omega$, there exists a finite $N_0=N_0(\omega,\varepsilon)$ such that
\[
M_N\geq \alpha_1(N)+a:={\beta_1(N)} \text{~~for all $N\geq N_0$}.
\]
Similar to  \eqref{reason01}, we  use the inequality $\lfloor  x \rfloor\leq x$ with $x={\beta_1(N_j)}$  satisfying ${\beta_1(N_j)}=j-1$ to obtain
\[
 \sum_{j=1}^{\infty}P\left(M_{N_j}<\beta_1(N_j)\right)
\leq  C\sum_{j=1}^{\infty} {e_j}^{- \left\{ \lambda^{\epsilon-a}  \ln \lambda \cdot \log_{1/\sqrt{pq}} j\right\} },
\]
where $\lim\limits_{j\to\infty}e_j^{  \lambda^{\epsilon-a}  \ln \lambda}  =\lambda^{\lambda^{\varepsilon-a}} $.
By \eqref{conv1}, $\sum_{j=1}^{\infty}P\left(M_{N_j}<\beta_1(N_j)\right)<\infty$ if and only if $\lambda^{\varepsilon-a}>1$. Hence $a=0$ by  the arbitrariness of positive $\varepsilon$. Remark \ref{cong-wor} also tells us that why we cannot judge whether \eqref{alpha2}  holds when $\varepsilon=0$.

On the other hand, suppose that there exists  constant $b\in[0,2+\log_{1/\sqrt{pq}}2]$ satisfying that for almost all $\omega\in \Omega$, there exists an infinite sequence $N_j=N_j(\omega,\varepsilon) (j=1,2,...)$ of integers such that
\[
M_{N_j}<\alpha_2(N_j)-b:=\beta_2(N_j).
\]
Similar  to  \eqref{proof-thm3.3-6}, we  use the inequality $x <\lfloor  x \rfloor+1$ with $x={\beta_2(N_j)}$  satisfying ${\beta_2(N_j)}=\lfloor  j^{1+\delta}\rfloor-1$ to obtain
\begin{equation*}
\sum_{j=1}^{\infty}P(A_j)
\geq C\sum_{j=1}^{\infty}f_j^{-\big\{(1+\delta)(1-2pq)\lambda^b\ln\lambda\cdot\log_{1/\sqrt{pq}} j\big\}},
\end{equation*}
where $\lim_{j\to\infty}f_j^{ (1+\delta)(1-2pq)\lambda^b\ln\lambda}=\lambda ^{(1+\delta)(1-2pq)(pq)^{-b/2}}$.
Once we choose  $\delta$ small enough, by \eqref{disconv1} $\sum_{j=1}^{\infty}P(A_j)=\infty$ if and only if $(1-2pq)(pq)^{-b/2}<1$. However, for any $b>0$ we always have $(1-2pq)(pq)^{-b/2}\gg1$ when $pq$ closes to 0. Hence $b=0$. The rest is the same  as in  the proof of Theorem \ref{THM02} and we omit.

(ii)~We aim to give functions to bound $M_N$ for any $p\in(0,1)$. Actually, the lower and upper bounds can be further modified for particular $p$. See e.g. Hao et al. (2021) with $p=1/2$.

\end{rem}

\begin{proof}[{\bf Proof of Theorem 2.3*}]

Let $A_n=\{S^{(n-\gamma_n)}_{\gamma_n}= \gamma_n-1\}$. Similar to \eqref{F1p} we have
$$
       P(A_n)=P(M_{\gamma_n}= \gamma_n-1)\sim   (pq)^{\frac{\gamma_n}{2}}  .
$$
When $\sum_{n=1}^{\infty}{(pq)}^{\frac{\gamma_n}{2}}=\infty$, we have
$$
\sum_{n=1}^{\infty}P(A_n)=\infty.
$$
Following the method in the proof of Theorem \ref{THM02}, we have
 \begin{eqnarray*}
\frac{P(A_iA_j)}{P(A_i)P(A_j)}= 1+o(1)\ \mbox{as}\ j\to\infty.
\end{eqnarray*}
 Then ($\ref{condition2}$) holds and by Lemma $\ref{B_C}$ we obtain Theorem 2.3* (i).

And when $\sum_{n=1}^{\infty}{(pq)}^{\frac{\gamma_n}{2}}<\infty$, we have
$$
\sum_{n=1}^{\infty}P(A_n)<\infty.
$$
By the Borel-Cantelli lemma, we obtain Theorem 2.3* (ii).
\end{proof}

\subsection{Proof of Proposition \ref{THM-MN}}

Theorem \ref{THM01} implies
\begin{equation*}
\liminf_{N\to {\infty}}\frac{M_N}{\log_{{1/\sqrt{pq}}} N}\geq 1 \quad a.s.
\end{equation*}
We only need to prove
\begin{equation}\label{LEF}
\limsup_{N\to {\infty}}\frac{M_N}{\log_{1/\sqrt{pq}} N}\leq 1  \quad a.s.
\end{equation}
For any $\varepsilon>0$ and $N\in\mathbb N$, we introduce the following notations:
\begin{equation*}\label{NOTA-U-AN}
\begin{aligned}
&u:=\left\lfloor(1+\varepsilon)\log_{1/\sqrt{pq}} N\right\rfloor+1,\\
&A_N=\bigcup\limits_{k=1}^{N-u+1} \left\{S_{u}^{(k)}=u-1\right\}. \\
\end{aligned}
\end{equation*}
Thus we have $P(A_N)\leq 2N(pq)^{\lfloor\frac{u}{2}\rfloor}$, and by the definition of $u$ it is easy to have
\[
  P(A_N)\leq  2N(pq)^{-1} (pq)^{\frac{1+\varepsilon}{2}\log_{1/\sqrt{pq}} N}= 2(pq)^{-1} N^{-\varepsilon}.
\]
Let $T\in\mathbb N$ with $T\varepsilon>1$,  it holds that
\begin{equation*}\begin{aligned}
\sum_{k=1}^{\infty}P(A_{k^T})\leq2(pq)^{-1} \sum_{k=1}^{\infty}\frac{1}{k^{T\varepsilon}}<{\infty},
\end{aligned}\end{equation*}
which, again by  the Borel-Cantelli lemma
\begin{equation*}
P(A_{k^{T}} \ i.o.)=P\Big(\limsup_{k\to{\infty}} \bigcup\limits_{i=1}^{k^T-{\tilde{u}}+1}\left\{S_{\tilde{u}}^{(i)}={\tilde{u}}-1\right\}\Big)=0
\end{equation*}
with ${\tilde{u}}:=\left\lfloor(1+\varepsilon)\log_{1/\sqrt{pq}} k^T\right\rfloor+1$. It follows by the definition of $\tilde{u}$ that
\begin{eqnarray*}\label{4.3}
\limsup_{k\to\infty}\frac{M_{k^T}}{\log_{1/\sqrt{pq}} k^T}\leq 1+\varepsilon \quad\quad a.s.
\end{eqnarray*}
So for any $N\in\mathbb N$, we choose $k\in\mathbb N$ such that $k^T\leq N\leq (k+1)^T$ and obtain
\[
M_N\leq M_{(k+1)^T}\leq (1+\varepsilon)\log_{1/\sqrt{pq}} (k+1)^T\leq (1+2\varepsilon)\log_{1/\sqrt{pq}} k^T\leq (1+2\varepsilon)\log_{1/\sqrt{pq}} N
\]
with probability 1 for all but finitely many $N$. Hence we get that $\limsup\limits_{N\to {\infty}}\frac{M_N}{\log_{1/\sqrt{pq}} N}\leq 1+2\varepsilon  \ a.s.$ \eqref{LEF} holds by the arbitrariness of $\varepsilon$. \hfill\fbox

\bigskip

{ \noindent {\bf\large Acknowledgments} \quad We thank the two referees for helpful comments and suggestions, which helped to improve the presentation of this note. We also thank Prof. Ze-Chun Hu for helpful discussion. This work was supported by the National National Science Foundation of China (Nos. 12101429, 12171335), the Science Development Project of Sichuan University (2020SCUNL201) and the Fundamental Research Funds for the central Universities of China.

\end{document}